\documentclass[11pt]{amsart}
\usepackage[T1]{fontenc}
\usepackage[utf8]{inputenc}
\usepackage{lmodern}
\usepackage[a4paper,margin=32mm]{geometry}
\usepackage{amsmath,amssymb,amsfonts,amsthm,mathtools}
\usepackage{enumitem}
\usepackage{hyperref}

\hypersetup{colorlinks=true,linkcolor=blue,citecolor=blue,urlcolor=blue}
\numberwithin{equation}{section}

\theoremstyle{plain}
\newtheorem{theorem}{Theorem}[section]
\newtheorem{proposition}[theorem]{Proposition}

\newtheorem{corollary}[theorem]{Corollary}

\newtheorem{question}[theorem]{Question}
\newtheorem{remark}[theorem]{Remark}
\email{zazagoginava@gmail.com}
\email{ugoginava@uaeu.ac.ae}
\subjclass[2020]{Primary 42C10; Secondary 42A24, 46E30}
\keywords{Walsh--Paley system, de la Vall\'ee Poussin means, everywhere divergence, Orlicz spaces, almost everywhere convergence}

\begin{document}
\title[de la Vall\'ee Poussin Means ]{de la Vall\'ee Poussin Means of Walsh-Fourier Expansions}
\author{Ushangi Goginava}
\address{Department of Mathematical Sciences, United Arab Emirates
University, P.O. Box 15551, Al Ain, Abu Dhabi, UAE}

\begin{abstract}
We study de la Vall\'ee Poussin means of Walsh--Fourier series associated
with a nondecreasing window sequence. We establish a sharp criterion for
almost everywhere convergence for integrable functions. We further show
that, when this criterion fails, every Orlicz class below the logarithmic
square-root scale contains a function whose de la Vall\'ee Poussin means
diverge everywhere.
\end{abstract}

\maketitle


\section{Introduction}

Questions on convergence and divergence of Fourier series in function spaces
remain central in harmonic analysis. For trigonometric Fourier series,
Kolmogorov proved the existence of an integrable function with almost
everywhere divergent Fourier series \cite%
{Kolmogorov-ae,Kolmogorov-everywhere}, while Marcinkiewicz constructed an
example with bounded divergence almost everywhere \cite{Marcinkiewicz}.
Further divergence results in Orlicz classes were obtained by Prokhorenko 
\cite{Prokhorenko}, Tandori \cite{Tandori-divergence}, Lukashenko \cite%
{Lukashenko}, K\"orner \cite{Korner-sets,Korner-everywhere}, Gosselin \cite%
{Gosselin}, Totik \cite{Totik}, Konyagin \cite%
{Konyagin-everywhere,Konyagin-subsequences,Konyagin-ICM}, Heladze \cite%
{Heladze}, G\'at \cite{gat}. The classical problem posed by Zygmund asks
whether the Orlicz class generated by the function $t\mapsto t\log^{+}t$
guarantees almost everywhere convergence of Fourier series; this is still
open \cite[Ch. XIII]{Zygmund}.

Parallel problems have been investigated for the Walsh--Paley system. Stein 
\cite{Stein} proved the existence of an integrable function whose
Walsh--Fourier series diverges everywhere. Divergence in Orlicz classes
close to $L^{1}$ was established by Moon \cite{Moon}, Schipp \cite%
{Schipp-divergence}, and Simon \cite{Simon}; see also Bochkarev, \cite%
{Bochkarev} Konyagin \cite{Konyagin-Walsh}, G\'at, Goginava, Karagulyan,
Mukhamedov, Oniani \cite{GGK-Div,GogProcDiv,GogJMAA-Div,GogOnDiv}. We refer
to the monograph of Schipp, Wade, and Simon \cite{Schipp-Wade-Simon} for
background on dyadic harmonic analysis.

Let $\lambda=\{\lambda_n\}_{n\ge1}$ be a nondecreasing sequence of integers
satisfying 
\begin{equation}  \label{eq:lambda-basic}
1\le \lambda_n \le n \qquad (n\in\mathbb{N}).
\end{equation}
For $f\in L^{1}([0,1))$ we define the de la Vall\'ee Poussin means of the
Walsh--Fourier series of $f$ by 
\begin{equation}  \label{eq:vp-def}
V_n^{(\lambda)}(f;x):=\frac{1}{\lambda_n+1}\sum_{k=n-\lambda_n}^{n}
S_k(f;x), \qquad n\in\mathbb{N},
\end{equation}
where $S_k(f)$ denotes the $k$th Walsh partial sum of $f$.

For an Orlicz function $\omega$, we denote by $L_{\omega}([0,1))$ the Orlicz
class 
\begin{equation*}
L_{\omega}([0,1)):=\Bigl\{f \text{ measurable on }[0,1): \int_0^1
\omega(|f(x)|)\,dx<\infty\Bigr\}.
\end{equation*}
Recall that for every Orlicz function, the quotient $\omega(t)/t$ is
nondecreasing on $(0,\infty)$; see, for example, \cite[Ch.~1]%
{Krasnoselskii-Rutickii}.

Our main result identifies the sharp window criterion for almost everywhere
convergence and shows that, on the divergence side, the obstruction to
convergence already appears in substantially smaller Orlicz clases.

\begin{theorem}
\label{thm:main} Let $\lambda=\{\lambda_n\}_{n\ge1}$ satisfy %
\eqref{eq:lambda-basic}.

\begin{itemize}
\item[(a)] If 
\begin{equation}  \label{eq:LT-good}
n=O(\lambda_n) \qquad (n\to\infty),
\end{equation}
then for every $f\in L^{1}([0,1))$, 
\begin{equation*}
V_n^{(\lambda)}(f;x)\longrightarrow f(x) \qquad \text{for almost every }%
x\in[0,1).
\end{equation*}

\item[(b)] Let $\omega :[0,\infty )\rightarrow \lbrack 0,\infty )$ be an
Orlicz function\footnote{$\omega $ is a continuous convex function on $%
[0,\infty )$ such that $\omega (0)=0$, $\omega (u)>0$ for $u>0$, and $\omega
(u)\rightarrow \infty $ as $u\rightarrow \infty $.}, and assume that 
\begin{equation}
\omega (t)=o\bigl(t\sqrt{\log t}\bigr)\qquad (t\rightarrow \infty ).
\label{eq:orlicz-subcritical}
\end{equation}%
If 
\begin{equation}
\sup_{n\geq 1}\frac{n}{\lambda _{n}}=\infty ,  \label{eq:LT-bad}
\end{equation}%
then there exists a function $f\in L_{\omega }([0,1))$ such that 
\begin{equation}
\limsup_{n\rightarrow \infty }|V_{n}^{(\lambda )}(f;x)|=\infty \qquad \text{%
for every }x\in \lbrack 0,1).  \label{eq:main-div}
\end{equation}
\end{itemize}
\end{theorem}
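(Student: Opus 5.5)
We treat the two parts separately. For part (a), we first write the de la Vall\'ee Poussin mean as a difference of Fej\'er means. With $\sigma_m(f)=\frac1m\sum_{k=0}^{m-1}S_k(f)$ we have
\begin{equation*}
V_n^{(\lambda)}(f)=\frac{(n+1)\sigma_{n+1}(f)-(n-\lambda_n)\sigma_{n-\lambda_n}(f)}{\lambda_n+1}.
\end{equation*}
Under \eqref{eq:LT-good} both coefficients $(n+1)/(\lambda_n+1)$ and $(n-\lambda_n)/(\lambda_n+1)$ are bounded. Their difference is exactly $1$. We then invoke Fine's theorem: Walsh--Fej\'er means of every $f\in L^1$ converge a.e.\ to $f$, and the maximal operator $\sup_m|\sigma_m f|$ is of weak type $(1,1)$ (Schipp). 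Writing
\begin{equation*}
V_n^{(\lambda)}(f)-f=\tfrac{n+1}{\lambda_n+1}\bigl(\sigma_{n+1}f-f\bigr)-\tfrac{n-\lambda_n}{\lambda_n+1}\bigl(\sigma_{n-\lambda_n}f-f\bigr),
\end{equation*}
each term tends to $0$ a.e. For the second term we treat $n-\lambda_n=0$ trivially, and otherwise note that it is either bounded along a subsequence, where it is eventually constant with coefficient tending to $0$ if $\lambda_n\to\infty$, or tends to infinity. A cleaner way is to note that $\sup_n|V_n^{(\lambda)}f|\le C\sup_m|\sigma_mf|$ and conclude by the standard density argument from Walsh polynomials, for which $V_n^{(\lambda)}P=P$ eventually, since $\lambda_n\to\infty$ is not needed: $S_kP=P$ for $k\ge\deg P$.

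For part (b) the plan follows the Schipp/Simon/Bochkarev style construction. The first step is to isolate, from \eqref{eq:LT-bad}, a subsequence $n_j$ with $n_j/\lambda_{n_j}\to\infty$. Along it, the kernel $\frac1{\lambda+1}\sum_{k=n-\lambda}^{n}D_k$ behaves like $D_{n}$ averaged over a short window. Using $D_{2^N+k}=D_{2^N}+r_N D_k$ type identities, we choose dyadic blocks $n=2^{N}+m$ with $\lambda_n\ll 2^{\ell}\le n$. The kernel then contains a factor like $w_{m'}\cdot D_{2^\ell}$ times a Dirichlet-type sum with $L^1$-norm of order $\log(n/\lambda_n)$ on a large set.

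The core step is a quantitative lemma. For every large $A$ (essentially $A\approx \log(n_j/\lambda_{n_j})$, which is unbounded) there is a Walsh polynomial $P$ with $\|P\|_1\le 1$, and in fact $\int\omega(|P|)$ controlled, with polarized spectrum, such that
\begin{equation*}
\max_{n\le N(P)}|V_n^{(\lambda)}(P;x)|\ge c\sqrt{A}\cdot(\text{normalization})
\end{equation*}
for every $x$. The $\sqrt{\log}$ threshold indicates the mechanism. We take $P$ to be a normalized sum of $D_{2^{k}}$-type bumps (a "Stein polynomial" / product of $(1+r_k)$ factors), chosen so that the $L^1$ mass is spread at height about $2^{K}$ and the means pick up roughly $\sqrt{K}$ via a Rademacher/Khintchine-type lower bound, since the sum of signs of $K$ terms is of size $\sqrt K$ for every $x$ after choosing the maximizing $n$. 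This step is the main obstacle: it must give an everywhere (not merely a.e.) lower bound while keeping $\int\omega(|P|)$ small. With $\omega(t)=o(t\sqrt{\log t})$, the Orlicz cost of a bump of height $H$ and mass $1/\sqrt{\log H}$-normalized is $o(1)$. We will first try dyadic shifts $P(x\dotplus t_i)$ over a finite set of translations $t_i$ covering $[0,1)$, to convert a lower bound on a set of positive measure into an everywhere lower bound.

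Finally, we glue these together. We choose $A_j\uparrow\infty$ and polynomials $P_j$ with disjoint, widely separated spectra, weights with $\sum_j\int\omega(|P_j|)$-type bounds ($\omega$ convex, so we use $\omega(\sum c_jg_j)\le\sum c_j\omega(\ldots)$ with $\sum c_j\le1$), and set $f=\sum_j c_jP_j$. Spectral separation ensures that for $n$ in the $j$-th range, $V_n^{(\lambda)}f=\sum_{i<j}c_iP_i+c_jV_n^{(\lambda)}P_j$. The earlier terms are bounded by a polynomial whose sup we absorb by choosing $c_jA_j$ growing much faster than $\sum_{i<j}c_i\|P_i\|_\infty$. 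This gives $\limsup|V_n^{(\lambda)}f(x)|=\infty$ for every $x$ and $f\in L_\omega$.
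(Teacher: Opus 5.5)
Your part (a) is correct, and it differs from the paper only in which maximal inequality it uses. You write $V_n^{(\lambda)}f$ as a bounded combination of two Fej\'er means and rely on Fine's theorem and Schipp's weak type $(1,1)$ bound for $\sup_m|\sigma_mf|$. The paper instead dominates $|V_n^{(\lambda)}f|$ by $\frac1n\sum_{k\le n}|S_kf|$ and cites Rodin's stronger inequality; your route needs only linear Fej\'er theory. There is one minor slip. The claim that $V_n^{(\lambda)}P=P$ eventually fails when $n-\lambda_n<\deg P$ (e.g.\ $\lambda_n=n$). What holds is $V_n^{(\lambda)}P\to P$, and this \emph{does} use $\lambda_n\to\infty$, which is automatic under $n=O(\lambda_n)$.

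Part (b) has a genuine gap. The ``core lemma'' is the whole difficulty, you do not construct it, and the mechanism you sketch would not produce it.
\begin{itemize}
\item A normalized sign sum $K^{-1/2}\sum_{j\le K}\pm r_{k_j}(x)$ is not bounded below at every $x$: for even $K$ it vanishes on a set of measure $\asymp K^{-1/2}$. Khintchine gives only averages.
\item Choosing the maximizing $n$ does not help if $n$ merely truncates the sum in the fixed order of the frequencies. For an alternating sign pattern, every partial sum has modulus at most $1$.
\item The fallback via dyadic translates fails as stated. Translation does not move the Walsh spectrum (it multiplies $\widehat P(k)$ by $w_k(t)$), so partial sums of a sum of translates mix all of them and can cancel. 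Separating them by modulation costs $L^1$ and Orlicz mass. Moreover, the number $M$ of pieces cannot stay bounded. For any $\tau_1,\dots,\tau_M\in\{\pm1\}^K$, take a vertex of $\{y\in[-1,1]^K:\sum_jy_j\tau_{ij}=0\ \forall i\}$; it has at most $M$ non-integral coordinates. Rounding it yields $\varepsilon\in\{\pm1\}^K$ with $|\sum_j\varepsilon_j\tau_{ij}|\le 2M<c\sqrt K$ for all $i$ once $K$ is large.
\item You never explain how the window average in $V_n^{(\lambda)}$ is reduced to a single partial sum at an $x$-dependent index.
\end{itemize}

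The missing idea, implemented by the paper's block polynomial $P_{m,\gamma}$, is to let the partial-sum index \emph{select a same-sign subset} of the Rademacher terms. The pointwise gain is then linear, not square-root, in their number, while all $2^\gamma$ possible selections together cost nothing extra in $L^1$.
\begin{itemize}
\item Put $\mu(v,j)=2^{m-\gamma}v+2^{m-2\gamma}(v\oplus2^j)$ and $P=\gamma^{-1/2}\sum_{v<2^\gamma}\sum_{j<\gamma}w_{\mu(v,j)}$. Via $\prod_{k<\gamma}(1+r_{m-\gamma+k}r_{m-2\gamma+k})=2^\gamma\mathbf 1_E$, where $E$ is the set on which $r_{m-\gamma+k}=r_{m-2\gamma+k}$ for all $k<\gamma$, this collapses to $P=\gamma^{-1/2}2^\gamma\mathbf 1_E\sum_{j<\gamma}r_{m-2\gamma+j}$. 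Hence $\|P\|_1\le1$, with Khintchine used only as an \emph{upper} bound, and $\|P\|_\infty\le2^\gamma\sqrt\gamma$.
\item Take $v(x)=\sum_{j\in J(x)}2^j$, with $J(x)$ the majority-sign set. The frequencies of $P$ in $[2^{m-\gamma}v(x),(2^{m-\gamma}+2^{m-2\gamma})v(x))$ are exactly $\mu(v(x),j)$, $j\in J(x)$. So the corresponding increment of partial sums has modulus $\#J(x)/\sqrt\gamma\ge\sqrt\gamma/2$ at \emph{every} $x$, and one endpoint $\ell(x)$ gives $|S_{\ell(x)}P(x)|\ge\sqrt\gamma/4$.
\item Since the spectrum lies in $2^{m-2\gamma}\mathbb N_0$, $S_kP$ is constant on blocks of length $2^{m-2\gamma}$. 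Hence $V_{\ell(x)}^{(\lambda)}P=S_{\ell(x)}P$ whenever $\lambda_{\ell(x)}<2^{m-2\gamma}$. This follows from the monotonicity of $\lambda$ and $N/\lambda_N>2^{2\gamma+1}$ with $m=|N|$.
\end{itemize}
With this lemma, the bound $\omega(2^{2\gamma})/2^{2\gamma}=o(\sqrt\gamma)$ and your gluing step finish the proof essentially as in the paper.
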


\begin{remark}
	 The analogue of Theorem 1.1(b) for the trigonometric system remains open. On the other hand, almost everywhere divergence in $L^1$ was established by Tandori \cite{Tandori-VP}. Part (a) of Theorem 1.1 is a Walsh analogue of Leindler's theorem \cite{Leindler} for the trigonometric system.
	\end{remark}

\section{Preliminaries and an auxiliary block polynomial}

We write $\mathbb{N}_{0}:=\mathbb{N}\cup \{0\}$ and identify $[0,1)$ with
the dyadic group in the standard way. For $n\in \mathbb{N}_{0}$ we denote by 
\begin{equation}
n=\sum_{j=0}^{\infty }n_{j}2^{j},\qquad n_{j}\in \{0,1\},  \label{eq:binary}
\end{equation}%
its binary expansion, and we write 
\begin{equation*}
|n|:=\max \{j\in \mathbb{N}_{0}:n_{j}=1\}\qquad (n\geq 1).
\end{equation*}%
The dyadic sum of nonnegative integers is defined by 
\begin{equation*}
n\oplus m:=\sum_{j=0}^{\infty }|n_{j}-m_{j}|2^{j}.
\end{equation*}%
Let $r_{j}(x):=(-1)^{\lfloor 2^{j+1}x\rfloor }$, $j\in \mathbb{N}_{0}$, be
the Rademacher functions. The Walsh--Paley system is defined by 
\begin{equation*}
w_{n}(x):=\prod_{j=0}^{\infty }r_{j}(x)^{n_{j}},\qquad n\in \mathbb{N}_{0}.
\end{equation*}%
If $f\in L^{1}([0,1))$, then its Walsh coefficients are 
\begin{equation*}
\widehat{f}(n):=\int_{0}^{1}f(x)w_{n}(x)\,dx,
\end{equation*}%
and the $n$th Walsh partial sum is 
\begin{equation*}
S_{n}(f;x):=\sum_{k=0}^{n-1}\widehat{f}(k)w_{k}(x),\qquad n\in \mathbb{N}.
\end{equation*}%
For a Walsh polynomial $P$, we write 
\begin{equation*}
\mathrm{Spec}(P):=\{k\in \mathbb{N}_{0}:\widehat{P}(k)\neq 0\}.
\end{equation*}

The next proposition is the basic building block of the divergence
construction.

\begin{proposition}
\label{prop:block} Let $m,\gamma\in\mathbb{N}$ with $2\gamma<m$. Define 
\begin{equation}  \label{eq:def-mu}
\mu(v,j):=2^{m-\gamma}v+2^{m-2\gamma}(v\oplus 2^j), \qquad 0\le
v<2^{\gamma}, \quad 0\le j<\gamma,
\end{equation}
and 
\begin{equation}  \label{eq:def-P}
P_{m,\gamma}(x):=\frac{1}{\sqrt{\gamma}}\sum_{v=0}^{2^{\gamma}-1}%
\sum_{j=0}^{\gamma-1} w_{\mu(v,j)}(x).
\end{equation}
Then the following assertions hold.

\begin{itemize}
\item[(i)] $\|P_{m,\gamma}\|_{L^{1}([0,1))}\le1$.

\item[(ii)] $\|P_{m,\gamma}\|_{L^{\infty}([0,1))}\le 2^{\gamma}\sqrt{\gamma}$%
.

\item[(iii)] 
\begin{equation*}
\mathrm{Spec}(P_{m,\gamma})\subset [2^{m-2\gamma},2^m)\cap 2^{m-2\gamma}%
\mathbb{N}_0.
\end{equation*}

\item[(iv)] For every $x\in[0,1)$ there exists an integer $\ell(x)$ such
that 
\begin{equation*}
\ell(x)\in [2^{m-\gamma},2^m)\cap 2^{m-2\gamma}\mathbb{N}
\end{equation*}
and 
\begin{equation*}
|S_{\ell(x)}(P_{m,\gamma};x)|\ge \frac14\sqrt{\gamma}.
\end{equation*}
\end{itemize}
\end{proposition}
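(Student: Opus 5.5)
The whole proof rests on one factorization of $P_{m,\gamma}$. For $x\in[0,1)$ put $y:=2^{m-\gamma}x \bmod 1$ and $z:=2^{m-2\gamma}x \bmod 1$. Then $w_{2^{m-\gamma}v}(x)=w_v(y)$ and $w_{2^{m-2\gamma}u}(x)=w_u(z)$. Since $w_{v\oplus 2^j}=w_v r_j$, we get
\begin{equation*}
P_{m,\gamma}(x)=\frac{1}{\sqrt\gamma}\sum_{v<2^\gamma}w_v(y)w_v(z)\sum_{j<\gamma}r_j(z)
=\frac{1}{\sqrt\gamma}\,D_{2^\gamma}(y\oplus z)\,F(z),\qquad F(z):=\sum_{j<\gamma}r_j(z).
\end{equation*}
Here $D_{2^\gamma}=2^\gamma\mathbf 1_{[0,2^{-\gamma})}$. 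The first $\gamma$ dyadic digits of $y$ are the digits $m-\gamma,\dots,m-1$ of $x$. The first $\gamma$ digits of $z$ are the digits $m-2\gamma,\dots,m-\gamma-1$ of $x$. These two blocks are disjoint because $2\gamma<m$.

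\textbf{Parts (i)--(iii).} For (i), condition on the $z$-block. The event $\{y\oplus z\in[0,2^{-\gamma})\}$ says that the $y$-block equals the $z$-block, so it has conditional probability $2^{-\gamma}$. Hence $\|P_{m,\gamma}\|_1=\gamma^{-1/2}\int_0^1|F|\le\gamma^{-1/2}\|F\|_2=1$. Part (ii) is immediate from $|D_{2^\gamma}|\le 2^\gamma$ and $|F|\le\gamma$. For (iii), note that $v\oplus 2^j\ge1$ and $v<2^\gamma$, so $2^{m-2\gamma}\le\mu(v,j)<2^m$, and every $\mu(v,j)$ is a multiple of $2^{m-2\gamma}$.

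\textbf{Reduction for (iv).} Take $\ell=2^{m-\gamma}v_0+2^{m-2\gamma}u$ with $1\le v_0<2^\gamma$ and $0\le u\le 2^\gamma$. The index $\mu(v,j)$ lies below $\ell$ exactly when $v<v_0$, or when $v=v_0$ and $v_0\oplus 2^j<u$. Therefore
\begin{equation*}
S_\ell(P_{m,\gamma};x)=\frac{1}{\sqrt\gamma}\Bigl(D_{v_0}(y\oplus z)\,F(z)\pm\sum_{j\in J}r_j(z)\Bigr),\qquad J:=\{j<\gamma:\ v_0\oplus2^j<u\},
\end{equation*}
where the sign is $w_{v_0}(y)w_{v_0}(z)$. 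As $u$ varies, the sets $J$ are exactly the prefixes of a fixed ordering of $\{0,\dots,\gamma-1\}$. First come the removals $j\in\operatorname{supp}v_0$, taken in decreasing order of $j$. Then come the additions $j\notin\operatorname{supp}v_0$, taken in increasing order of $j$.

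\textbf{Choice of $v_0$ and $u$.} I would take $v_0=\sum_{j\in E}2^j$ with $E:=\{j:r_j(z)=\varepsilon\}$ for a sign $\varepsilon$, and choose $u$ so that $J=E$. The row term is then $\pm|E|$, and one of the two choices of $\varepsilon$ gives $|E|\ge\gamma/2$. Any resulting $\ell$ lies in $[2^{m-\gamma},2^m)\cap2^{m-2\gamma}\mathbb N$, as required.

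\textbf{Main obstacle: the Dirichlet term $D_{v_0}(y\oplus z)F(z)$.} Let $s$ be the first digit at which $y$ and $z$ differ. Then $D_{v_0}(y\oplus z)$ depends only on the bits of $v_0$ at positions $\le s$. It vanishes when $v_0$ has no such bits, and is otherwise of size up to about $2^s$, with a sign controlled by $r_s$.

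I would split into two cases.
\begin{itemize}
\item If $|F(z)|\ge\sqrt\gamma/4$, forget the row term and choose $v_0=2^{s}$ (or $\ell$ with $u$ chosen so that $J=\emptyset$). Then $|D_{v_0}(y\oplus z)|\ge1$ and the Dirichlet term alone gives $|S_\ell|\ge\gamma^{-1/2}|F|\cdot1$. More crudely, one can use $\ell$ at a full block, where $S_\ell=P_{m,\gamma}$ or a multiple of $F$.
\item If $|F(z)|<\sqrt\gamma/4$, then $|D_{v_0}F|\le|D_{v_0}|\sqrt\gamma/4$, and I must pick $\varepsilon$ so that $D_{v_0}$ is small or has the same sign as the row term. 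The natural way is to select $E$ inside the positions $>s$, where $D_{v_0}$ vanishes. Then $|E|\ge(\gamma-1-s)/2$. The regime where $s$ is close to $\gamma$ needs a separate argument. There $y$ and $z$ nearly agree, and I would use instead that $D_{2^{s}}(y\oplus z)=2^{s}$ is huge, combined with the removal-prefix structure.
\end{itemize}
Balancing these cases to reach the constant $\tfrac14\sqrt\gamma$ is where I expect the real work to lie.
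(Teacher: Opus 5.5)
\textbf{There is a genuine gap in (iv).} Parts (i)--(iii) are fine. Your factorization $P_{m,\gamma}(x)=\gamma^{-1/2}D_{2^\gamma}(y\oplus z)F(z)$ is the paper's product formula $\prod_k(1+r_{m-\gamma+k}r_{m-2\gamma+k})$ in different notation, and your formula for $S_\ell$ with $\ell=2^{m-\gamma}v_0+2^{m-2\gamma}u$ is correct. However, you never establish the bound $\frac14\sqrt\gamma$, and the case split you sketch does not deliver it.
\begin{itemize}
\item In the first case, $|F(z)|\ge\sqrt\gamma/4$ and $|D_{v_0}(y\oplus z)|\ge1$ give only $|S_\ell|\ge\gamma^{-1/2}\cdot\sqrt\gamma/4=1/4$. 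This is off by a factor $\sqrt\gamma$. Also, $v_0=2^s$ is not admissible when the two digit blocks of $x$ agree ($s\ge\gamma$).
\item The fallback ``full block'' does not help. The index $\ell=2^m$ is not allowed, and $P_{m,\gamma}(x)=0$ unless the two blocks coincide.
\item The second case is explicitly left unfinished.
\end{itemize}

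\textbf{The missing idea.} You do not need to control the Dirichlet term at all, because it is itself an admissible partial sum. Keep your $v_0=\sum_{j\in E}2^j$ with $|E|\ge\gamma/2$, so $v_0\ge1$. Use \emph{both} $u=0$ and $u=v_0$, that is,
\[
\ell_1=2^{m-\gamma}v_0,\qquad \ell_2=2^{m-\gamma}v_0+2^{m-2\gamma}v_0 .
\]
Both lie in $[2^{m-\gamma},2^m)\cap 2^{m-2\gamma}\mathbb N$. By your own formula:
\begin{itemize}
\item $S_{\ell_1}(P_{m,\gamma};x)=\gamma^{-1/2}D_{v_0}(y\oplus z)F(z)$;
\item $S_{\ell_2}-S_{\ell_1}=\pm\gamma^{-1/2}\sum_{j\in E}r_j(z)=\pm|E|/\sqrt\gamma$, whose modulus is at least $\sqrt\gamma/2$.
\end{itemize}
Since $\max\{|A|,|A+B|\}\ge|B|/2$ for real $A,B$, one of $\ell_1,\ell_2$ satisfies $|S_{\ell}(P_{m,\gamma};x)|\ge\frac14\sqrt\gamma$. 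This is exactly the paper's argument. No case distinction on the first differing digit $s$ or on the size of $F(z)$ is needed.
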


\begin{proof}
Let 
\begin{equation*}
v=\sum_{k=0}^{\gamma -1}\nu _{k}2^{k},\qquad \nu _{k}\in \{0,1\}.
\end{equation*}%
Since the binary digit blocks of $2^{m-\gamma }v$ and $2^{m-2\gamma
}(v\oplus 2^{j})$ are disjoint, we have 
\begin{equation*}
w_{\mu (v,j)}(x)=w_{2^{m-\gamma }v}(x)\,w_{2^{m-2\gamma }(v\oplus 2^{j})}(x).
\end{equation*}%
A direct computation gives 
\begin{equation*}
w_{2^{m-\gamma }v}(x)=\prod_{k=0}^{\gamma -1}r_{m-\gamma +k}(x)^{\nu _{k}}
\end{equation*}%
and 
\begin{equation*}
w_{2^{m-2\gamma }(v\oplus 2^{j})}(x)=r_{m-2\gamma +j}(x)\prod_{k=0}^{\gamma
-1}r_{m-2\gamma +k}(x)^{\nu _{k}}.
\end{equation*}%
Therefore 
\begin{eqnarray*}
w_{\mu (v,j)}(x) &=&r_{m-2\gamma +j}(x)\prod_{k=0}^{\gamma -1}\bigl(%
r_{m-\gamma +k}(x)r_{m-2\gamma +k}(x)\bigr)^{\nu _{k}} \\
&=&r_{m-2\gamma +j}(x)c\left( x\right) ,
\end{eqnarray*}%
where%
\begin{equation*}
c\left( x\right) :=\prod_{k=0}^{\gamma -1}\bigl(r_{m-\gamma
+k}(x)r_{m-2\gamma +k}(x)\bigr)^{\nu _{k}}\in \left\{ -1,1\right\} .
\end{equation*}%
Summing over $v$ yields 
\begin{equation}
P_{m,\gamma }(x)=\frac{1}{\sqrt{\gamma }}\sum_{j=0}^{\gamma -1}r_{m-2\gamma
+j}(x)\prod_{k=0}^{\gamma -1}\bigl(1+r_{m-\gamma +k}(x)r_{m-2\gamma +k}(x)%
\bigr).  \label{eq:P-factor}
\end{equation}

Let 
\begin{equation*}
E_{m,\gamma}:=\bigl\{x\in[0,1): r_{m-\gamma+k}(x)=r_{m-2\gamma+k}(x)\text{
for }0\le k<\gamma\bigr\}.
\end{equation*}
Then \eqref{eq:P-factor} becomes 
\begin{equation}  \label{eq:P-on-E}
P_{m,\gamma}(x)=\frac{2^{\gamma}}{\sqrt{\gamma}}\mathbf{1}%
_{E_{m,\gamma}}(x)\sum_{j=0}^{\gamma-1} r_{m-2\gamma+j}(x).
\end{equation}
Assertion~(ii) follows immediately from \eqref{eq:P-on-E}.

To prove~(i), note that on $E_{m,\gamma}$ the vector 
\begin{equation*}
\bigl(r_{m-2\gamma}(x),\dots,r_{m-\gamma-1}(x)\bigr)
\end{equation*}
assumes each value in $\{\pm1\}^{\gamma}$ on a set of measure $2^{-2\gamma}$%
. Hence, by \eqref{eq:P-on-E}, 
\begin{align*}
\|P_{m,\gamma}\|_{L^{1}([0,1))} &=\frac{2^{\gamma}}{\sqrt{\gamma}}%
\int_{E_{m,\gamma}} \left|\sum_{j=0}^{\gamma-1} r_{m-2\gamma+j}(x)\right|\,dx
\\
&=\frac{1}{2^{\gamma}\sqrt{\gamma}}\sum_{\varepsilon\in\{\pm1\}^{\gamma}}
|\varepsilon_1+\cdots+\varepsilon_{\gamma}| \\
&\le \frac{1}{\sqrt{\gamma}}\left(\frac{1}{2^{\gamma}}\sum_{\varepsilon\in\{%
\pm1\}^{\gamma}} |\varepsilon_1+\cdots+\varepsilon_{\gamma}|^2\right)^{1/2}
\le 1.
\end{align*}
This proves~(i).

Since every $\mu(v,j)$ is a multiple of $2^{m-2\gamma}$, the right-hand side
of \eqref{eq:def-P} shows that 
\begin{equation*}
\mathrm{Spec}(P_{m,\gamma})\subset 2^{m-2\gamma}\mathbb{N}_0.
\end{equation*}
Moreover, 
\begin{equation*}
2^{m-2\gamma}\le \mu(v,j)<2^m,
\end{equation*}
which proves~(iii).

Fix $x\in[0,1)$. Choose $\sigma(x)\in\{\pm1\}$ that occurs at least $\gamma
/2$ times among 
\begin{equation*}
r_{m-2\gamma}(x),\dots,r_{m-\gamma-1}(x),
\end{equation*}
and put 
\begin{equation*}
J(x):=\{0\le j<\gamma: r_{m-2\gamma+j}(x)=\sigma(x)\}, \qquad \#J(x)\ge 
\frac{\gamma}{2},
\end{equation*}
\begin{equation*}
v(x):=\sum_{j\in J(x)} 2^j.
\end{equation*}
Define 
\begin{equation*}
\ell_1(x):=2^{m-\gamma}v(x), \qquad
\ell_2(x):=2^{m-\gamma}v(x)+2^{m-2\gamma}v(x).
\end{equation*}
Since $v(x)\ge1$, we have 
\begin{equation*}
2^{m-\gamma}\le \ell_1(x)<\ell_2(x)<2^m, \qquad \ell_1(x),\ell_2(x)\in
2^{m-2\gamma}\mathbb{N}.
\end{equation*}
For fixed $v=v(x)$ we have 
\begin{equation*}
\mu(v,j)=2^{m-\gamma}v+2^{m-2\gamma}(v\oplus 2^j).
\end{equation*}
Because $v\oplus 2^j<v$ if and only if the $j$th binary digit of $v$ equals $%
1$, it follows that 
\begin{equation*}
\mu(v(x),j)\in [\ell_1(x),\ell_2(x)) \qquad \Longleftrightarrow \qquad j\in
J(x).
\end{equation*}
Consequently, 
\begin{align*}
&S_{\ell_2(x)}(P_{m,\gamma};x)-S_{\ell_1(x)}(P_{m,\gamma};x) \\
&\qquad = \frac{1}{\sqrt{\gamma}}\sum_{j\in J(x)} w_{\mu(v(x),j)}(x) =\frac{%
c(x)}{\sqrt{\gamma}}\sum_{j\in J(x)} r_{m-2\gamma+j}(x)
\end{align*}
with a unimodular factor $c(x)\in\{\pm1\}$ independent of $j$. Since $%
r_{m-2\gamma+j}(x)=\sigma(x)$ for every $j\in J(x)$ and $|c(x)|=1$, it
follows that 
\begin{equation*}
\left|S_{\ell_2(x)}(P_{m,\gamma};x)-S_{\ell_1(x)}(P_{m,\gamma};x)\right|= 
\frac{\#J(x)}{\sqrt{\gamma}}\ge \frac12\sqrt{\gamma}.
\end{equation*}
Since for arbitrary real numbers $A$ and $B$ one has 
\begin{equation*}
\max\{|A|,|A+B|\}\ge \frac12|B|,
\end{equation*}
we conclude that at least one of $\ell_1(x)$ and $\ell_2(x)$ satisfies 
\begin{equation*}
|S_{\ell(x)}(P_{m,\gamma};x)|\ge \frac14\sqrt{\gamma}.
\end{equation*}
This proves~(iv).
\end{proof}

\begin{corollary}
\label{cor:VP-equals-partial} Let $m,\gamma\in\mathbb{N}$ with $2\gamma<m$,
let $P_{m,\gamma}$ be as in Proposition~\ref{prop:block}, and let $\ell(x)$
be chosen as in Proposition~\ref{prop:block}(iv). If $\lambda_{%
\ell(x)}<2^{m-2\gamma}$, then 
\begin{equation*}
V_{\ell(x)}^{(\lambda)}(P_{m,\gamma};x)=S_{\ell(x)}(P_{m,\gamma};x).
\end{equation*}
In particular, 
\begin{equation*}
|V_{\ell(x)}^{(\lambda)}(P_{m,\gamma};x)|\ge \frac14\sqrt{\gamma}.
\end{equation*}
\end{corollary}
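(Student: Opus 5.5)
The plan is to show that all partial sums $S_k(P_{m,\gamma};x)$ with $\ell(x)-\lambda_{\ell(x)}\le k\le \ell(x)$ coincide with $S_{\ell(x)}(P_{m,\gamma};x)$. The identity then follows at once, since $V_{\ell(x)}^{(\lambda)}$ is the arithmetic mean of $\lambda_{\ell(x)}+1$ equal numbers.

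First, by Proposition~\ref{prop:block}(iii), every frequency in $\mathrm{Spec}(P_{m,\gamma})$ is a multiple of $2^{m-2\gamma}$. For any $k\le n$ we have
\begin{equation*}
S_n(P;x)-S_k(P;x)=\sum_{k\le i<n}\widehat{P}(i)w_i(x).
\end{equation*}
This difference vanishes whenever the half-open interval $[k,n)$ contains no point of $\mathrm{Spec}(P)$.

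Next, set $\ell=\ell(x)$. By Proposition~\ref{prop:block}(iv), $\ell$ is itself a positive multiple of $2^{m-2\gamma}$, say $\ell=q2^{m-2\gamma}$ with $q\ge1$. Take $k$ with $\ell-\lambda_\ell\le k\le \ell$. The hypothesis $\lambda_\ell<2^{m-2\gamma}$ gives
\begin{equation*}
k>(q-1)2^{m-2\gamma}.
\end{equation*}
Therefore $[k,\ell)\subset\bigl((q-1)2^{m-2\gamma},\,q2^{m-2\gamma}\bigr)$, and this open interval contains no multiple of $2^{m-2\gamma}$. Hence $S_k(P_{m,\gamma};x)=S_\ell(P_{m,\gamma};x)$ for every such $k$. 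Note also that $k\ge \ell-\lambda_\ell\ge0$ by \eqref{eq:lambda-basic}. If $k=0$ occurs, the sum $S_0$ is empty, but the argument still applies: $\ell-\lambda_\ell>(q-1)2^{m-2\gamma}\ge0$, so in fact $k\ge1$ and every term in the mean is well defined.

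Finally, averaging over $k$ in \eqref{eq:vp-def} gives $V_\ell^{(\lambda)}(P_{m,\gamma};x)=S_\ell(P_{m,\gamma};x)$. The lower bound $\frac14\sqrt{\gamma}$ is then exactly Proposition~\ref{prop:block}(iv). None of these steps is a genuine obstacle. The only point needing care is that the window $[\ell-\lambda_\ell,\ell)$ lies strictly between two consecutive multiples of $2^{m-2\gamma}$. This uses both that $\ell$ is a multiple of $2^{m-2\gamma}$ and that partial sums exclude the endpoint index.
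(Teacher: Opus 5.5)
Your proof is correct and follows essentially the paper's argument. The spectrum lies in $2^{m-2\gamma}\mathbb{N}_0$ and $\ell(x)=q2^{m-2\gamma}$, so the hypothesis $\lambda_{\ell(x)}<2^{m-2\gamma}$ keeps the whole averaging window inside the block $\bigl((q-1)2^{m-2\gamma},q2^{m-2\gamma}\bigr]$, on which $S_k(P_{m,\gamma};x)$ is constant. Your extra check that $k\ge1$, so that every $S_k$ in the mean is defined, is a small detail the paper leaves implicit.
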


\begin{proof}
Put $B:=2^{m-2\gamma}$. By Proposition~\ref{prop:block}(iii), every Walsh
frequency of $P_{m,\gamma}$ is a multiple of $B$. Hence 
\begin{equation}  \label{eq:block-constant}
S_k(P_{m,\gamma};x)=S_{qB}(P_{m,\gamma};x) \qquad ((q-1)B<k\le qB).
\end{equation}
Since $\ell(x)\in B\mathbb{N}$, we may write $\ell(x)=qB$. If $%
\lambda_{\ell(x)}<B$, then every integer 
\begin{equation*}
k\in [\ell(x)-\lambda_{\ell(x)},\ell(x)]
\end{equation*}
belongs to the same block $((q-1)B,qB]$, and therefore %
\eqref{eq:block-constant} implies 
\begin{equation*}
V_{\ell(x)}^{(\lambda)}(P_{m,\gamma};x)=\frac{1}{\lambda_{\ell(x)}+1}%
\sum_{k=\ell(x)-\lambda_{\ell(x)}}^{\ell(x)}
S_k(P_{m,\gamma};x)=S_{\ell(x)}(P_{m,\gamma};x).
\end{equation*}
The lower bound now follows from Proposition~\ref{prop:block}(iv).
\end{proof}

\section{Proof of Theorem~\protect\ref{thm:main}}

\begin{proof}[Proof of Theorem~\protect\ref{thm:main}(a)]
Assume \eqref{eq:LT-good}. Then there exists $c>0$ such that $\lambda_n\ge
cn $ for all sufficiently large $n$. Define the maximal operator 
\begin{equation*}
M_{\lambda}f(x):=\sup_{n\ge1} |V_n^{(\lambda)}(f;x)|.
\end{equation*}
For all sufficiently large $n$, 
\begin{align*}
|V_n^{(\lambda)}(f;x)| &\le \frac{1}{\lambda_n+1}\sum_{k=n-\lambda_n}^{n}
|S_k(f;x)| \\
&\le \frac{n}{\lambda_n+1}\cdot \frac{1}{n}\sum_{k=1}^{n} |S_k(f;x)| \le C%
\frac{1}{n}\sum_{k=1}^{n} |S_k(f;x)|
\end{align*}
with a constant $C$ independent of $n$ and $x$. Therefore, fore some $n_0
\in \mathbb{N}$ 
\begin{equation}  \label{eq:maximal-dominated}
M_{\lambda}f(x)\le C\sigma^{*}f(x)+\max_{1\le n<n_0} |V_n^{(\lambda)}(f;x)|,
\end{equation}
where 
\begin{equation*}
\sigma^{*}f(x):=\sup_{n\ge1} \frac{1}{n}\sum_{k=1}^{n} |S_k(f;x)|.
\end{equation*}
Rodin proved that 
\begin{equation}  \label{eq:Rodin}
\|\sigma^{*}f\|_{L^{1,\infty}([0,1))}\le C\|f\|_{L^{1}([0,1))} \qquad (f\in
L^{1}([0,1)))
\end{equation}
for the Walsh system \cite{Rodin}. Moreover, the second term on the
right-hand side of \eqref{eq:maximal-dominated} is the maximum of finitely
many fixed bounded operators and is therefore bounded from $L^{1}([0,1))$ to
itself. Hence \eqref{eq:maximal-dominated} implies the weak-type estimate 
\begin{equation}  \label{eq:Mlambda-weak}
\|M_{\lambda}f\|_{L^{1,\infty}([0,1))}\le C\|f\|_{L^{1}([0,1))}.
\end{equation}

Let $P$ be a Walsh polynomial. Then $S_k(P)=P$ for all $k$ larger than the
degree of $P$, and therefore 
\begin{equation*}
V_n^{(\lambda)}(P;x)\longrightarrow P(x) \qquad (n\to\infty)
\end{equation*}
for every $x\in[0,1)$: the number of indices $k$ in the averaging interval
for which $S_k(P)\ne P$ is bounded independently of $n$, while $%
\lambda_n\to\infty$ by \eqref{eq:LT-good}.

Now let $f\in L^{1}([0,1))$, and choose Walsh polynomials $P_j$ such that $%
\|f-P_j\|_{L^{1}([0,1))}\to0$. Since $V_n^{(\lambda)}(P_j;x)\to P_j(x)$
everywhere, for every $\eta>0$, 
\begin{equation*}
\begin{aligned} \bigl\{x: \limsup_{n\to\infty}
|V_n^{(\lambda)}(f;x)-f(x)|>2\eta\bigr\} &\subset \{x:
M_{\lambda}(f-P_j)(x)>\eta\} \\ &\qquad \cup \{x: |f(x)-P_j(x)|>\eta\}.
\end{aligned}
\end{equation*}
Taking measures and using \eqref{eq:Mlambda-weak}, we obtain 
\begin{equation*}
\begin{aligned} \mu\bigl\{x: \limsup_{n\to\infty}
|V_n^{(\lambda)}(f;x)-f(x)|>2\eta\bigr\} &\le
\frac{C}{\eta}\|f-P_j\|_{L^{1}([0,1))} \\ &\qquad +
\frac{1}{\eta}\|f-P_j\|_{L^{1}([0,1))}. \end{aligned}
\end{equation*}
Letting $j\to\infty$ gives the desired almost everywhere convergence.
\end{proof}

\begin{proof}[Proof of Theorem~\protect\ref{thm:main}(b)]
Fix an Orlicz function $\omega$ satisfying \eqref{eq:orlicz-subcritical},
and assume \eqref{eq:LT-bad}.

For each $a\in\mathbb{N}$ and each integer $q\ge1$, set 
\begin{equation*}
\delta(a,q):=\min\left\{2^{-a},\frac{2^{2q}}{4^a\,\omega(2^{2q})}\right\}.
\end{equation*}
Because of \eqref{eq:orlicz-subcritical}, 
\begin{equation*}
\frac{2^{2q}\sqrt{q}}{\omega(2^{2q})}\longrightarrow \infty \qquad
(q\to\infty),
\end{equation*}
and therefore, for each fixed $a$, 
\begin{equation*}
\delta(a,q)\sqrt{q}\longrightarrow \infty \qquad (q\to\infty).
\end{equation*}
Choose recursively a sequence $\{\gamma_a\}_{a\ge1}\subset\mathbb{N}$ such
that, with 
\begin{equation*}
\delta_a:=\delta(a,\gamma_a),
\end{equation*}
we have 
\begin{equation}  \label{eq:gamma-choice}
\delta_a\sqrt{\gamma_a}>16\left(a+\sum_{k=1}^{a-1} \delta_k 2^{\gamma_k}%
\sqrt{\gamma_k}\right) \qquad (a\ge1).
\end{equation}

Next, using \eqref{eq:LT-bad}, choose an increasing sequence $%
\{N_a\}_{a\ge1}\subset\mathbb{N}$ such that, with $m_a:=|N_a|$, 
\begin{equation}  \label{eq:Na-choice}
\frac{N_a}{\lambda_{N_a}}>2^{2\gamma_a+1}
\end{equation}
and 
\begin{equation}  \label{eq:ma-separation}
m_a>m_{a-1}+2\gamma_a \qquad (a\ge2).
\end{equation}
Indeed, since the ratios $N/\lambda_N$ are unbounded, for every prescribed
threshold there are arbitrarily large integers $N$ satisfying that
threshold, so the recursive choice is possible.

Now \eqref{eq:Na-choice} implies 
\begin{equation}  \label{eq:lambda-small}
\lambda_{N_a}<\frac{N_a}{2^{2\gamma_a+1}}<2^{m_a-2\gamma_a}.
\end{equation}
Let 
\begin{equation*}
W_a:=P_{m_a,\gamma_a}, \qquad f(x):=\sum_{a=1}^{\infty} \delta_a W_a(x).
\end{equation*}
Since $\delta_a\le 2^{-a}$ and $\|W_a\|_{L^{1}([0,1))}\le1$ by Proposition~%
\ref{prop:block}(i), the series converges absolutely for almost every $x$
and in $L^{1}([0,1))$, so $f\in L^{1}([0,1))$.

We now prove that $f\in L_{\omega}([0,1))$. By Proposition~\ref{prop:block}%
(ii), 
\begin{equation*}
|W_a(x)|\le 2^{\gamma_a}\sqrt{\gamma_a}\le 2^{2\gamma_a} \qquad (x\in[0,1)).
\end{equation*}
Since $\sum_{a=1}^{\infty} 2^{-a}=1$ and $\omega$ is convex with $%
\omega(0)=0 $, Jensen's inequality yields 
\begin{equation*}
\omega(|f(x)|)\le \sum_{a=1}^{\infty} 2^{-a}\,\omega\bigl(2^a\delta_a
|W_a(x)|\bigr) \qquad \text{for a.e. }x\in[0,1).
\end{equation*}
Now $2^a\delta_a\le1$, and the monotonicity of $\omega(t)/t$ gives 
\begin{equation*}
\omega\bigl(2^a\delta_a |W_a(x)|\bigr) \le 2^a\delta_a\,\omega(|W_a(x)|) \le
2^a\delta_a\,\frac{\omega(2^{2\gamma_a})}{2^{2\gamma_a}}\,|W_a(x)|.
\end{equation*}
Integrating and using Proposition~\ref{prop:block}(i), we obtain 
\begin{align*}
\int_0^1 \omega(|f(x)|)\,dx &\le \sum_{a=1}^{\infty} 2^{-a}\cdot
2^a\delta_a\,\frac{\omega(2^{2\gamma_a})}{2^{2\gamma_a}}\,\|W_a%
\|_{L^{1}([0,1))} \\
&\le \sum_{a=1}^{\infty} \delta_a\,\frac{\omega(2^{2\gamma_a})}{2^{2\gamma_a}%
} \le \sum_{a=1}^{\infty} 4^{-a}<\infty.
\end{align*}
Hence $f\in L_{\omega}([0,1))$.

Fix $x\in[0,1)$. By Proposition~\ref{prop:block}(iv), choose $\ell_a(x)$ so
that 
\begin{equation*}
\ell_a(x)\in [2^{m_a-\gamma_a},2^{m_a})\cap 2^{m_a-2\gamma_a}\mathbb{N}
\end{equation*}
and 
\begin{equation*}
|S_{\ell_a(x)}(W_a;x)|\ge \frac14\sqrt{\gamma_a}.
\end{equation*}
Since $\ell_a(x)<2^{m_a}\le N_a$, the monotonicity of $\lambda$ and %
\eqref{eq:lambda-small} imply 
\begin{equation*}
\lambda_{\ell_a(x)}\le \lambda_{N_a}<2^{m_a-2\gamma_a}.
\end{equation*}
Therefore Corollary~\ref{cor:VP-equals-partial} gives 
\begin{equation}  \label{eq:Wa-large}
|V_{\ell_a(x)}^{(\lambda)}(W_a;x)|\ge \frac14\sqrt{\gamma_a}.
\end{equation}

We decompose 
\begin{equation*}
V_{\ell_a(x)}^{(\lambda)}(f;x)=I_a(x)+II_a(x)+III_a(x),
\end{equation*}
where 
\begin{equation*}
I_a(x):=\delta_a V_{\ell_a(x)}^{(\lambda)}(W_a;x),
\end{equation*}
\begin{equation*}
II_a(x):=\sum_{k=1}^{a-1} \delta_k V_{\ell_a(x)}^{(\lambda)}(W_k;x), \qquad
III_a(x):=\sum_{k=a+1}^{\infty} \delta_k V_{\ell_a(x)}^{(\lambda)}(W_k;x).
\end{equation*}

We first show that $III_a(x)=0$. Indeed, by \eqref{eq:ma-separation}, for
every $k>a$, 
\begin{equation*}
m_a<m_k-2\gamma_k.
\end{equation*}
Hence 
\begin{equation*}
\ell_a(x)<2^{m_a}<2^{m_k-2\gamma_k}\le \min \mathrm{Spec}(W_k),
\end{equation*}
where the last inequality follows from Proposition~\ref{prop:block}(iii).
Therefore all Walsh partial sums of $W_k$ up to order $\ell_a(x)$ vanish,
and thus 
\begin{equation*}
III_a(x)=0.
\end{equation*}

Next we estimate $II_{a}(x)$. If $k<a$, then \eqref{eq:ma-separation}
implies $m_{k}<m_{a}-2\gamma _{a}$, so that 
\begin{equation*}
\max \mathrm{Spec}(W_{k})<2^{m_{k}}<2^{m_{a}-2\gamma _{a}}
\end{equation*}%
by Proposition~\ref{prop:block}(iii). On the other hand, 
\begin{equation*}
\ell _{a}(x)-\lambda _{\ell _{a}(x)}\geq 2^{m_{a}-\gamma _{a}}-\left(
2^{m_{a}-2\gamma _{a}}-1\right) >2^{m_{a}-2\gamma _{a}},
\end{equation*}%
since $\lambda _{\ell _{a}(x)}<2^{m_{a}-2\gamma _{a}}$ and $\lambda _{\ell
_{a}(x)}$ is an integer.  Hence every partial sum appearing in $V_{\ell
_{a}(x)}^{(\lambda )}(W_{k};x)$ already equals $W_{k}(x)$, and therefore 
\begin{equation*}
V_{\ell _{a}(x)}^{(\lambda )}(W_{k};x)=W_{k}(x).
\end{equation*}%
Using Proposition~\ref{prop:block}(ii), we conclude that 
\begin{equation}
|II_{a}(x)|\leq \sum_{k=1}^{a-1}\delta _{k}|W_{k}(x)|\leq
\sum_{k=1}^{a-1}\delta _{k}2^{\gamma _{k}}\sqrt{\gamma _{k}}<\frac{1}{16}%
\delta _{a}\sqrt{\gamma _{a}},  \label{eq:II-bound}
\end{equation}%
where the last inequality follows from \eqref{eq:gamma-choice}.

Finally, \eqref{eq:Wa-large} yields 
\begin{equation}  \label{eq:I-bound}
|I_a(x)|\ge \frac14\delta_a\sqrt{\gamma_a}.
\end{equation}
Combining \eqref{eq:II-bound}, \eqref{eq:I-bound}, and the identity $%
III_a(x)=0$, we obtain 
\begin{equation*}
|V_{\ell_a(x)}^{(\lambda)}(f;x)|\ge |I_a(x)|-|II_a(x)|\ge \frac14\delta_a%
\sqrt{\gamma_a}-\frac{1}{16}\delta_a\sqrt{\gamma_a}=\frac{3}{16}\delta_a%
\sqrt{\gamma_a}.
\end{equation*}
By \eqref{eq:gamma-choice}, the last quantity is greater than $3a$. Hence 
\begin{equation*}
|V_{\ell_a(x)}^{(\lambda)}(f;x)|>3a.
\end{equation*}
Since $a$ is arbitrary, \eqref{eq:main-div} follows for the fixed point $x$,
and therefore for every $x\in[0,1)$.
\end{proof}

\begin{question}
\label{prob:critical} Assume that 
\begin{equation*}
\sup_{n\ge1}\frac{n}{\lambda_n}=\infty.
\end{equation*}
Does the critical Orlicz class corresponding to the logarithmic-square-root
borderline guarantee almost everywhere convergence of de la Vall\'ee Poussin
means of Walsh--Fourier series?
\end{question}

\section*{Conflicts of Interest}

The authors declare that they have no conflicts of interest.

\section*{Data Availability}

Not applicable.

\end{document}